\theoremstyle{plain}
\newtheorem{thm}{\protect\theoremname}[section]
\theoremstyle{definition}
\newtheorem{rem}[thm]{\protect\remarkname}
\theoremstyle{definition}
\newtheorem{defn}[thm]{\protect\definitionname}
\theoremstyle{plain}
\theoremstyle{plain}
\newtheorem{lem}[thm]{\protect\lemmaname}
\theoremstyle{plain}
\theoremstyle{plain}
\newtheorem{cor}[thm]{\protect\corollaryname}
\theoremstyle{definition}
\newtheorem*{ack}{Acknowledgement}
\theoremstyle{definition}
\theoremstyle{definition}
\theoremstyle{definition}
\DeclareMathOperator{\Ber}{Ber}
\DeclareMathOperator{\supp}{supp}
\newcommand{\R}{\mathbb R}
\newcommand{\N}{\mathbb N}
\newcommand{\Q}{\mathbb Q}
\newcommand{\PP}{\mathbb P}
\newcommand{\FF}{\mathcal F}
\newcommand{\II}{\mathcal I}
\newcommand{\ii}{\underline{i}}
\newcommand{\hdim}{\dim_H}
\providecommand{\conjecturename}{Conjecture}
\providecommand{\corollaryname}{Corollary}
\providecommand{\definitionname}{Definition}
\providecommand{\examplename}{Example}
\providecommand{\lemmaname}{Lemma}
\providecommand{\problemname}{Problem}
\providecommand{\propositionname}{Proposition}
\providecommand{\remarkname}{Remark}
\providecommand{\theoremname}{Theorem}
\providecommand{\taskname}{Task}
\begin{document}

\title{On the dimension of stationary measures for random piecewise affine interval homeomorphisms}

\author[K. Bara\'{n}ski]{Krzysztof Bara\'{n}ski$^1$}
\address{$^1$Institute of Mathematics, University of Warsaw, ul. Banacha 2, 02-097 Warszawa, Poland.}
\email{baranski@mimuw.edu.pl}

\author[A. \'{S}piewak]{Adam \'{S}piewak$^{2,3}$}
\address{$^2$Department of Mathematics,	Bar-Ilan University, Ramat-Gan, 5290002, Israel\ \ \
	$^3$Institute of Mathematics, Polish Academy of Sciences, ul.~\'Sniadeckich 8, 00-656 Warszawa, Poland}
\email{ad.spiewak@gmail.com}

\subjclass[2010]{Primary 37E05, 37E10, 37H10, 37H15.}


\maketitle

\begin{abstract}
We study stationary measures for iterated function systems (considered as random dynamical systems) consisting of two piecewise affine interval homeomorphisms, called Alsed\`a--Misiurewicz (AM) systems. We prove that for an open set of parameters, the unique non-atomic stationary measure for an AM-system has Hausdorff dimension strictly smaller than $1$. In particular, we obtain singularity of these measures, answering partially a question of Alsed\`a and Misiurewicz from 2014.
\end{abstract}

\section{Introduction}
In recent years, a growing interest in low-dimensional random dynamics has led to an intensive study of random one-dimensional systems given by (semi)groups of interval and circle homeomorphisms, both from stochastic and geometric point of view (see e.g. \cite{alseda-misiurewicz,Czudek-Szarek,CzudekSzarekWojewodka,gelfert,homburg16,homburg,LuczynskaUniqueErgodicity,LuczynskaSzarek22,malicet,SZ1,SZ2}). This can be seen as an extension of the research on the well-known case of groups of smooth circle diffeomorphisms (see e.g.~\cite{ghys-survey,navas-book}).

Let $f_1, \ldots, f_m$, $m \geq 2$, be homeomorphisms of a $1$-dimensional compact manifold $X$ (a closed interval or a circle). The transformations $f_i$ generate a semigroup consisting of iterates $f_{i_n} \circ \cdots \circ f_{i_1}$, where $i_1, \ldots, i_n \in \{1, \dots, m\}$, $n \in \{0, 1, 2,\ldots\}$. 
For a probability vector  $(p_1, \ldots, p_m)$, such a system defines a Markov process on $X$ which, by the Krylov–Bogolyubov theorem, admits a (non-necessarily unique) \emph{stationary measure}, i.e.~a Borel probability measure $\mu$ on $X$ satisfying
\[
\mu(A) = \sum_{i = 1}^m p_i \mu(f_i^{-1}(A))
\]
for every Borel set $A \subset X$. In many cases it can be shown that the stationary measure is unique (at least within some class of measures) and is either absolutely continuous or singular with respect to the Lebesgue measure. It is usually a non-trivial problem to determine which of the two cases occurs (see e.g. \cite[Section 7]{navas}), and the question has been solved only in some particular cases.

This paper is a continuation of the research started in \cite{BS19Singular} on singular stationary measures for so-called Alsed\`a--Misiurewicz systems (AM-systems), defined in \cite{alseda-misiurewicz}. These are random systems generated by two piecewise affine increasing homeomorphisms $f_-$, $f_+$ of the unit interval $[0,1]$, such that $f_i(0) = 0$, $f_i(1) = 1$ for $i = -, +$, each $f_i$ has exactly one point of non-differentiability $x_i \in (0,1)$, and $f_-(x) < x < f_+(x)$ for $x \in (0,1)$. For a detailed description of AM-systems refer to \cite{BS19Singular}. The dynamics of AM-systems and related ones has already gained some interest in recent years, being studied in e.g.~\cite{alseda-misiurewicz,BS19Singular,Bradik-Roth,Czernous20,CzernousSzarek20,Czudek,Toyokawa}. Within the class of uniformly contracting iterated function systems, piecewise linear maps and the dimension of their attractors were recently studied in \cite{ProkajSimonPiecewise}.

In this paper, as explained below, we study stationary measures for symmetric AM-systems with positive endpoint Lyapunov exponents.

\begin{defn} \label{defn:AM}
A symmetric AM-system is the system $\{f_-, f_+\}$ of increasing homeomorphisms of the interval $[0,1]$ of the form
\[
f_-(x)=
\begin{cases}
a x &\text{for }x\in[0,x_-]\\
1 - b(1 - x) &\text{for }x\in (x_-, 1]
\end{cases}, \qquad
f_+(x)=
\begin{cases}
b x &\text{for }x\in[0,x_+]\\
1 - a(1 - x) &\text{for }x\in (x_+, 1]
\end{cases},
\]
where $0 < a < 1 < b$ and
\[
x_- = \frac{b - 1}{b - a}, \qquad x_+ = \frac{1 - a}{b - a}.
\]
See Figure~\ref{fig:graph}. 
\end{defn}
\begin{figure}[ht!]
\begin{center}
\includegraphics[width=0.45\textwidth]{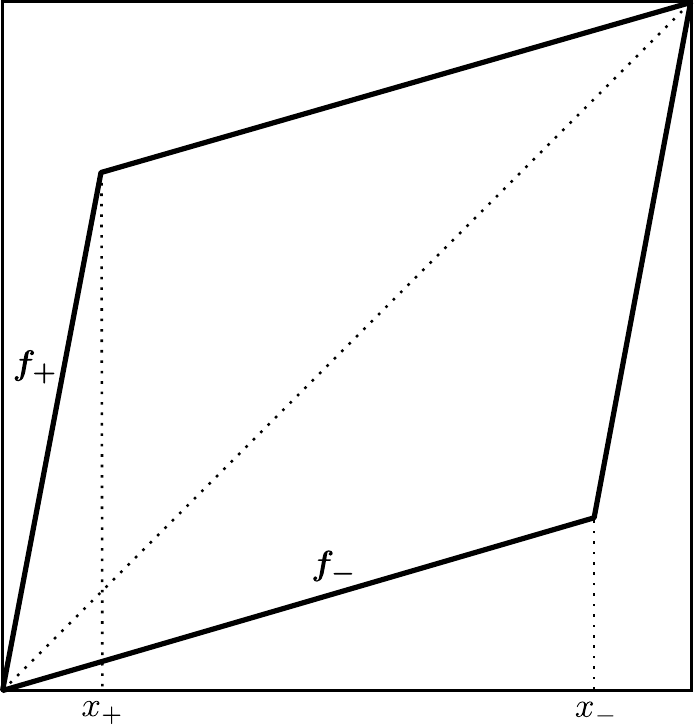}
\end{center}
\caption{An example of a symmetric AM-system.}\label{fig:graph}
\end{figure}
We consider $\{f_-, f_+\}$ as a random dynamical system, which means that iterating the maps, we choose them independently with probabilities $p_-, p_+$, where $(p_-, p_+)$ is a given probability vector (i.e.~$p_-, p_+ > 0$, $p_- + p_+ = 1$). Formally, this defines the \emph{step skew product}
	\begin{equation}\label{eq:skew}
	\FF^+\colon \Sigma_2^+ \times [0,1] \to \Sigma_2^+ \times [0,1], \qquad \FF^+(\ii, x) = (\sigma(\ii), f_{i_1}(x)),
	\end{equation}
where $\Sigma_2^+ = \{-,+\}^\N,\ \ii = (i_1, i_2, \ldots) \in \Sigma_2^+$ and $\sigma\colon \Sigma_2^+ \to \Sigma_2^+$ is the left-side shift. 

The \emph{endpoint Lyapunov exponents} of an AM-system $\{f_-, f_+\}$ are defined as
\[
\Lambda(0) = p_- \log f_-'(0) + p_+ \log f_+'(0),\qquad
\Lambda(1) = p_- \log f_-'(1) + p_+ \log f_+'(1).
\]
It is known (see \cite{alseda-misiurewicz,homburg}) that if the endpoint Lyapunov exponents are both positive, then the AM-system exhibits the \emph{synchronization} property, i.e.~for almost all paths $(i_1, i_2, \ldots) \in \{-, +\}^\N$ (with respect to the $(p_-, p_+)$-Bernoulli measure) we have $|f_{i_n} \circ \cdots \circ f_{i_1}(x) - f_{i_n} \circ \cdots \circ f_{i_1}(y)| \to 0$ as $n \to \infty$ for every $x,y \in [0,1]$. Moreover, in this case there exists a unique stationary measure $\mu$ without atoms at the endpoints of $[0,1]$, i.e.~a Borel probability measure $\mu$ on $[0,1]$, such that
\[
\mu = p_- \, (f_-)_* \mu + p_+ \, (f_+)_* \mu,
\]
with $\mu(\{0,1\}) = 0$ (see \cite{alseda-misiurewicz}, \cite[Proposition 4.1]{homburg16}, \cite[Lemmas~3.2--3.4]{homburg} and, for a more general case, \cite[Theorem 1]{Czudek-Szarek}). From now on, by a stationary measure for an AM-system we will always mean the measure $\mu$.
It is known that $\mu$ is non-atomic and is either absolutely continuous or singular with respect to the Lebesgue measure (see \cite[Propositions 3.10--3.11]{BS19Singular}). 

In \cite{alseda-misiurewicz}, Alsed\`a and Misiurewicz conjectured that the stationary measure $\mu$ for an AM-system should be singular for typical parameters.
In our previous paper \cite{BS19Singular} we showed that there exist parameters $a,b, (p_-, p_+)$, for which $\mu$ is singular with Hausdorff dimension smaller than $1$ (see \cite[Theorems~2.10 and~2.12]{BS19Singular}). These examples can be found among AM-systems with \emph{resonant} parameters, i.e. the ones with $\frac{\log a}{\log b} \in \Q$. In most of the examples the measure $\mu$ is supported on an \emph{exceptional minimal set}, which is a Cantor set of dimension smaller than $1$ (although we also have found examples of singular stationary measures with the support equal to the unit interval, see \cite[Theorem 2.16]{BS19Singular}). 

In this paper, already announced in \cite{BS19Singular}, we make a subsequent step to answer the Alsed\`a and Misiurewicz question, showing that the stationary measure $\mu$ is singular for an open set of parameters $(a,b)$ and probability vectors $(p_-, p_+)$. In particular, we find non-resonant parameters (i.e.~those with $\frac{\log a}{\log b} \notin \Q$), for which the corresponding stationary measure is singular (note that non-resonant AM-systems necessarily have stationary measures with support equal to $[0,1]$, see \cite[Proposition~2.6]{BS19Singular}). To prove the result, we present another method to verify singularity of stationary measures for AM-systems. Namely, instead of constructing a measure supported on a set of small dimension, we use the well-known bound on the dimension of stationary measure 
\[ 
\dim_H \mu \leq -\frac{H(p_-, p_+)}{\chi(\mu)},
\]
in terms of its \emph{entropy} 
\[H(p_-, p_+) = -p_-\log p_- -  p_+\log p_+\]
and the \emph{Lyapunov exponent}
\[ \chi(\mu) = \int \limits_{[0,1]} ( p_- \log f'_-(x) + p_+ \log f'_+(x) )d\mu(x), \]
proved in \cite{rams_jaroszewska} in a very general setting.
We find an open set of parameters for which the Lyapunov exponent is small enough (hence the average contraction is strong enough) to guarantee $\hdim\mu < 1$. The upper bound on the Lyapunov exponent follows from estimates of the expected return time to the interval 
\[
M = [x_+, x_-].
\]

\begin{rem}
One should note that the question of Alsed\`a and Misiurewicz has been answered when considered within a much broader class of general random interval homeomorphisms with positive endpoint Lyapunov exponents \cite{Bradik-Roth,CzernousSzarek20} and minimal random homeomorphisms of the circle \cite{Czernous20}. More precisely, Czernous and Szarek considered in \cite{CzernousSzarek20} the closure $\overline{\mathcal{G}}$ of the space $\mathcal{G}$ of all random systems $((g_-, g_+), (p_-, p_+))$ of absolutely continuous, increasing homeomorphisms $g_-, g_+$ of $[0,1]$, taken with probabilities $p_-, p_+$, such that $g_-, g_+$ are $C^1$ in some fixed neighbourhoods of $0$ and $1$, have positive endpoint Lyapunov exponents and satisfy $g_-(x) < x < g_+(x)$ for $x \in (0,1)$. In \cite[Theorem 10]{CzernousSzarek20}, they proved that for a generic system in $\overline{\mathcal{G}}$ (in the Baire category sense under a natural topology), the unique non-atomic stationary measure is singular. This result was extended by Brad\'ik and Roth in \cite[Theorem 6.2]{Bradik-Roth}, where they allowed the functions to be only differentiable at $0,1$, and showed that in addition to being singular, the stationary measure has typically full support. Similar results were obtained by Czernous \cite{Czernous20} for minimal systems on the circle. However, as the finite-dimensional space of AM-systems is meagre as a subset of the spaces considered in \cite{Bradik-Roth,Czernous20,CzernousSzarek20}, these results give no information on the singularity of stationary measures for typical AM-systems.

\end{rem}

\begin{ack} Krzysztof Bara\'nski was supported by the National Science
	Centre, Poland, grant no 2018/31/B/ST1/02495. Adam \'Spiewak acknowledges support from the Israel Science Foundation, grant 911/19. A part of this work was done when the second author was visiting the Budapest University of Technology and Economics. We thank Bal\'azs B\'ar\'any, K\'aroly Simon and R. D\'aniel Prokaj for useful discussions and the staff of the Institute of Mathematics of the Budapest University of Technology and Economics for their hospitality. 
\end{ack}

\section{Results}

We adopt a convenient notation 
\[
b = a^{-\gamma}
\]
for $a \in (0,1)$, $\gamma > 0$ and
\[
\II\colon [0,1] \to [0,1], \qquad \II(x) = 1 - x,
\]
so that a symmetric AM-system has the form
\begin{equation}\label{eq:AM_new_param}
f_-(x)=
\begin{cases}
a x &\text{for }x\in[0,x_-]\\
\II(a^{-\gamma}\II(x)) &\text{for }x\in (x_-, 1]
\end{cases}, \qquad
f_+(x)=
\begin{cases}
a^{-\gamma} x &\text{for }x\in[0,x_+]\\
\II(a\II(x)) &\text{for }x\in (x_+, 1]
\end{cases},
\end{equation}
where
\[
x_- = \frac{a^{-\gamma} - 1}{a^{-\gamma} - a}, \qquad x_+ = \frac{1 - a}{a^{-\gamma} - a}. 
\]
By definition, we have
\begin{equation}\label{eq:sym}
f_\pm = \II \circ f_\mp \circ \II^{-1} = \II \circ f_\mp \circ \II.
\end{equation}
Under this notation, the endpoint Lyapunov exponents for the system \eqref{eq:AM_new_param} and a probability vector $(p_-, p_+)$ are given by 
\[
\Lambda(0) = (p_- - \gamma p_+)\log a, \qquad \Lambda(1) = (p_+ - \gamma p_-)\log a.
\]
Throughout the paper we assume that $\Lambda(0)$ and $\Lambda(1)$ are positive, which is equivalent to
\begin{equation}\label{eq:gamma}
\gamma > \max \Big(\frac{p_-}{p_+}, \frac{p_+}{p_-}\Big).
\end{equation}
In particular, we have $\gamma > 1$. Note that this implies
\begin{equation}\label{x+<x-}
x_+ < x_-.
\end{equation}
Indeed, if $\gamma > 1$, then the endpoint Lyapunov exponents for $p_- = p_+ = 1/2$ are positive, so \eqref{x+<x-} follows from \cite[Lemma 4.1]{BS19Singular}.

The aim of this paper is to prove the following theorem.

\begin{thm}\label{thm:singularity_lyap} Consider a space of symmetric AM-systems $\{f_-, f_+\}$ of the form \eqref{eq:AM_new_param} with positive endpoint Lyapunov exponents. Then there is a non-empty open set of parameters $(a,\gamma) \in (0,1) \times (1, \infty)$ and probability vectors $(p_-, p_+)$, such that the corresponding stationary measure $\mu$ for the system $\{f_-, f_+\}$ is singular with Hausdorff dimension smaller than $1$. More precisely, there exists $\delta > 0$ such that for every $(p_-, p_+)$ with $p_-, p_+ < \frac{1}{2} + \delta$ there is a non-empty open interval $J_{p_-,p_+} \subset (1,\frac 3 2)$, depending continuously on $(p_-, p_+)$, such that for $\gamma \in J_{p_-,p_+}$ and $a \in (0, a_{max})$ for some $a_{max} = a_{max}(\gamma) > 0$,depending continuously on $\gamma$, we have
\[
\dim_H \mu \leq \frac{p\log p + (1-p)\log (1-p)}{\Big(1 - \frac{(1+\gamma)p^2(p+\gamma)}{\gamma - p(1-p)}\Big) \log a} < 1,
\]
where $p = \max(p_-, p_+)$. 

In particular, in the case $(p_-, p_+) = (\frac{1}{2},\frac{1}{2})$ we have 
\[
\dim_H \mu \leq \frac{(1 - 4 \gamma) \log 2}{(\gamma - 1)(3/2 - \gamma) \log a} < 1
\]
for $\gamma \in (1,\frac{3}{2})$, $a \in \Big(0, 2^{\frac{1 - 4\gamma}{(\gamma - 1)(3/2 - \gamma)}}\Big)$.
\end{thm}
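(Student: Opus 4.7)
The starting point is the Rams--Jaroszewska inequality $\dim_H\mu\le H(p_-,p_+)/(-\chi(\mu))$ recalled above, which reduces the task to producing a uniform lower bound for $-\chi(\mu)$ on an open set of parameters. The integrand $p_-\log f_-'+p_+\log f_+'$ is piecewise constant on the partition $[0,x_+]\cup M\cup(x_-,1]$ with values $\Lambda(0)$, $\log a$, $\Lambda(1)$, so a direct algebraic rearrangement using $p_-+p_+=1$ gives
\[\chi(\mu)=\bigl(1-(1+\gamma)(p_+q+p_-q')\bigr)\log a,\qquad q=\mu([0,x_+]),\ q'=\mu([x_-,1]),\]
and the entire problem reduces to showing $p_+q+p_-q'\le p^2(p+\gamma)/(\gamma-p(1-p))$ with $p=\max(p_-,p_+)$.

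To estimate $q$ (the argument for $q'$ is symmetric), I would exploit that on $[0,x_+]$ both branches act linearly, $f_-(x)=ax$ and $f_+(x)=a^{-\gamma}x$, and that $\gamma>1$ implies $a^{-\gamma}x_+\le x_-$, so $f_+$ maps $[0,x_+]$ into $[0,x_-]$. Consequently every excursion from $M$ into $[0,x_+]$ stays there until it returns to $M$ and is a pure multiplicative random walk. In log-coordinates $V_n=\log(x_+/X_n)/|\log a|$ this is an additive walk with steps $+1$ (probability $p_-$) and $-\gamma$ (probability $p_+$); the positive-exponent hypothesis $\gamma>p_-/p_+$ is exactly the negative-drift condition, so the exit time $\tau^-$ of $[0,x_+]$ is a.s.\ finite, and Wald's identity applied to the martingale $V_n-n(p_--\gamma p_+)$ bounds the exit time from $y$ by $(\log(x_+/y)+\gamma|\log a|)/\Lambda(0)$. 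Combining this with Kac's formula $q=\mu(M)\,E_{\mu_M}[N^-_1]$ (where $N^-_1$ denotes the number of steps in $[0,x_+]$ during one excursion from $M$) and with the observation that an excursion enters $[0,x_+]$ exactly when $X_0\in[x_+,x_+/a]$ and $f_-$ is chosen, one obtains
\[q\le\frac{(1+\gamma)p_-}{\gamma p_+-p_-}\mu([x_+,x_+/a]),\]
and an analogous bound for $q'$.

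The delicate step is a non-trivial bound on the exit-zone masses $\mu([x_+,x_+/a])$ and $\mu([1-x_+/a,x_-])$. Applying the stationary equation $\mu=p_-(f_-)_*\mu+p_+(f_+)_*\mu$ to these intervals (after a careful preimage analysis handling both branches of $f_\pm$) yields the mass-balance identity whose leading piece is $p_-\mu([x_+,x_+/a])=p_+\mu([a^\gamma x_+,x_+])$. A naive substitution of the trivial bound $\mu([a^\gamma x_+,x_+])\le q$ is circular: it gives only $q\le p_+(1+\gamma)/(\gamma p_+-p_-)\cdot q$, and the right-hand coefficient exceeds $1$. The circularity is broken by iterating the balance on the nested scales $a^{k\gamma}x_+$ and by sharpening the Wald bound using the distribution of the overshoot $V_{\tau^-}\in[-\gamma,0)$ of the walk $V_n$; the combined bookkeeping produces the target inequality
\[p_+q+p_-q'\le\frac{p^2(p+\gamma)}{\gamma-p(1-p)},\]
and hence $-\chi(\mu)\ge(1-\alpha)|\log a|$ with $\alpha=(1+\gamma)p^2(p+\gamma)/(\gamma-p(1-p))$.

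The proof is then completed by a continuity/openness argument. At $p=1/2$ the coefficient $1-\alpha$ simplifies to $(\gamma-1)(3/2-\gamma)/(4\gamma-1)$, strictly positive on $(1,3/2)$; by continuity $1-\alpha>0$ persists on a full open neighborhood of any such parameter pair, and picking $a$ below the explicit threshold $a_{\max}(\gamma)=\exp(-H(p_-,p_+)/(1-\alpha))$ makes the resulting bound $\dim_H\mu\le H(p_-,p_+)/((1-\alpha)|\log a|)$ strictly less than $1$; the formula stated for $p_-=p_+=1/2$ is then an immediate substitution. The main obstacle throughout is the mass-balance step: the exit-time Wald bound and the leading one-step mass identity are individually insufficient, and the essence of the argument is the iterated bookkeeping that provides the crucial extra factor of $p$ in the numerator of $\alpha$.
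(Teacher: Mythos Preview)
Your overall architecture matches the paper's: the Rams--Jaroszewska inequality, the piecewise-constant formula
\[
\chi(\mu)=\bigl(1-(1+\gamma)(p_+q+p_-q')\bigr)\log a,\qquad q=\mu([0,x_+)),\ q'=\mu((x_-,1]),
\]
the random-walk description on $[0,x_+]$, Wald's identity, and Kac's lemma are exactly the ingredients used. Where you diverge is in the organization of the Kac/Wald step, and that is where your argument has a real gap.

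You try to bound $q$ (and symmetrically $q'$) by first bounding it in terms of the exit-zone mass $\mu(L)$, $L=[x_+,x_+/a)$, and then bounding $\mu(L)$ via a flux identity $p_-\mu(L)=p_+\mu([a^\gamma x_+,x_+))$. As you yourself note, substituting the trivial estimate $\mu([a^\gamma x_+,x_+))\le q$ is circular, and you resolve this only by an appeal to ``iterating the balance on nested scales'' and ``sharpening the Wald bound via the overshoot distribution''. Neither of these is carried out, and it is not at all clear that such an iteration converges to the precise constant $p^2(p+\gamma)/(\gamma-p(1-p))$ you need; the overshoot law of the walk with step sizes $\{1,-\gamma\}$ is genuinely complicated for irrational $\gamma$. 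This is the crux of the whole theorem, and in your write-up it is a black box.

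The paper avoids this difficulty entirely by organizing Kac's lemma differently. Instead of bounding $q$ and $q'$, it bounds $\mu(M)$ from below. Writing $n_M$ for the first return time to $\Sigma_2^+\times M$ under the skew product and $\nu=(\mu(M))^{-1}(\PP\otimes\mu)|_{\Sigma_2^+\times M}$, Kac gives $\mu(M)^{-1}=\int n_M\,d\nu$. The exit event is $E=\{i_1=-\}\times L\cup\{i_1=+\}\times R$, so
\[
\int n_M\,d\nu\le 1+\nu(E)\bigl(\max(\mathbb{E}_-N_-,\mathbb{E}_+N_+)-1\bigr),
\]
where $N_\pm$ is the return time to $M$ from $x_\mp$ conditioned on the first step being $\pm$. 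The key elementary observation, which replaces your entire mass-balance iteration, is that once one checks $\overline L\cap\overline R=\emptyset$ (this needs a small-$a$ condition, Lemma~\ref{lem:LR}), one has $\mu(L)+\mu(R)\le\mu(M)$ and hence
\[
\nu(E)=\frac{p_-\mu(L)+p_+\mu(R)}{\mu(M)}\le p.
\]
Wald, with a one-step conditioning on $X_2^-$ to control the overshoot, then gives $\mathbb{E}_-N_--1\le(p_-+\gamma)/(\gamma p_+-p_-)$ (and symmetrically for $N_+$), and the combination yields $\mu(M)\ge(\gamma(1-p)-p)/(\gamma-p(1-p))$. Since $p_+q+p_-q'\le p(q+q')=p(1-\mu(M))$, this is exactly your target inequality.

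In short: the missing idea in your sketch is that one should apply Kac to the \emph{return time} to $M$ rather than to the occupation time of $[0,x_+]$; the extra factor of $p$ you are hunting for via iterated flux identities comes for free from the disjointness of $L$ and $R$ inside $M$. You should also explicitly verify $\overline L\cap\overline R=\emptyset$, which constrains $a$ and is needed for the argument.
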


\begin{rem}\label{rem:range} The range of probability vectors $(p_-, p_+)$ for which we obtain the singularity of $\mu$ for a non-empty open set of parameters $a, \gamma$, is rather small. As the proof of Theorem~\ref{thm:singularity_lyap} shows, suitable conditions for the possible values of $p  = \max(p_-, p_+)$ are given by the inequalities \eqref{eq:gamma'} and \eqref{eq:p,gamma}. Solving them, we obtain $p \in [\frac{1}{2}, p_0)$, where $p_0 = 0.503507...$ is the smaller of the two real roots of the polynomial $p^6-2p^5+5p^4-6p^3-2p^2+1$. As $p$ varies from $\frac 1 2$ to $p_0$, the range of allowable parameters $\gamma$ shrinks from the interval $(1, \frac 3 2)$ to a singleton. For such values of $p$ and $\gamma$, the  measure $\mu$ is singular for sufficiently small $a > 0$. See Figure~\ref{fig:p-gamma}.
\end{rem}

\begin{figure}[ht!]
\begin{center}
\includegraphics[width=0.6\textwidth]{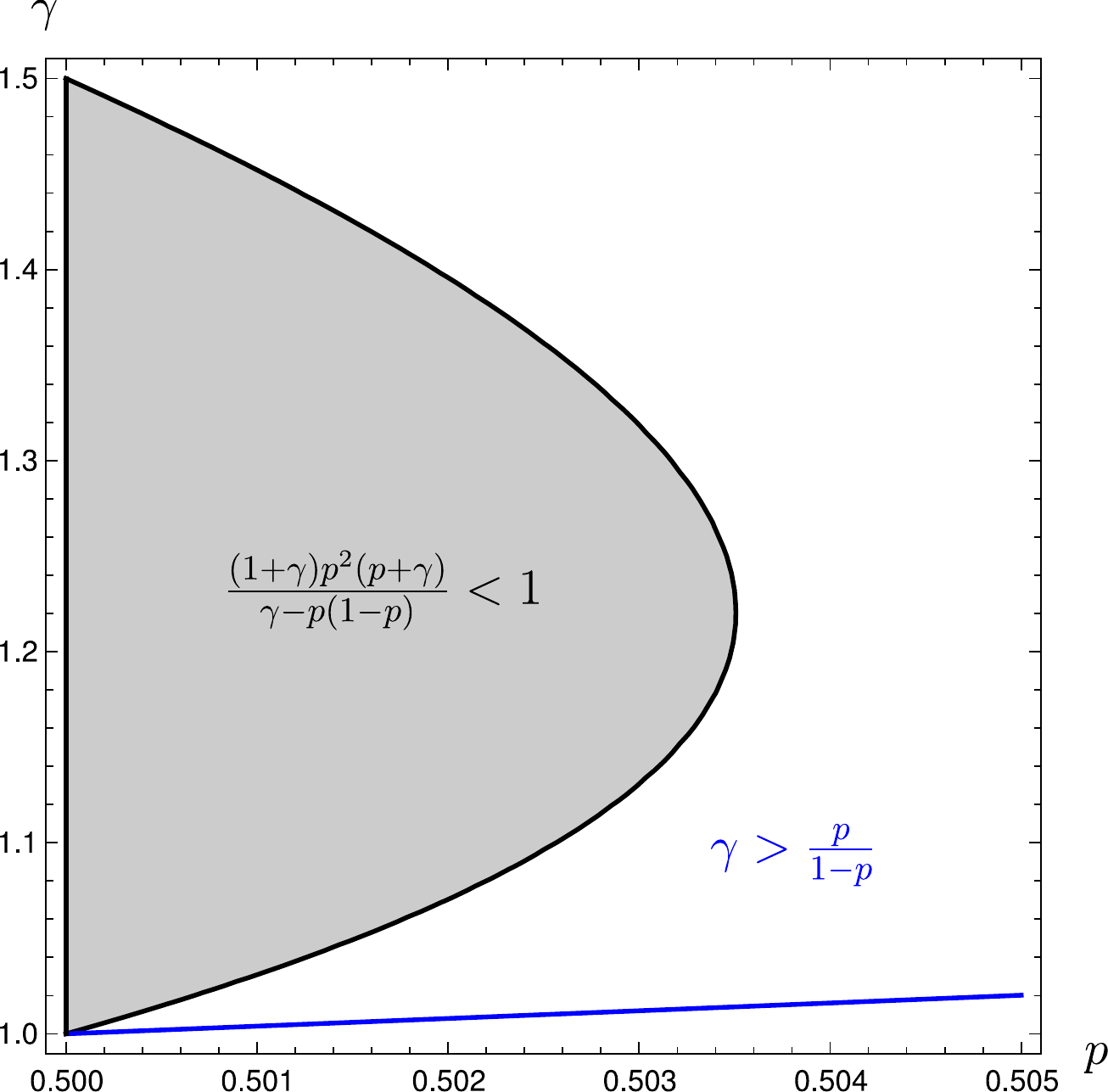}
\end{center}
\caption{The range of parameters $p = \max(p_-, p_+)$ and $\gamma$, for which the stationary measure $\mu$ for the system \eqref{eq:AM_new_param} is singular for sufficiently small $a > 0$.}\label{fig:p-gamma}
\end{figure}
\begin{rem}\label{rem:disjoint}
Every system of the form \eqref{eq:AM_new_param} with $a < \frac{1}{2}$ is of \emph{disjoint type} in the sense of \cite[Definition~2.3]{BS19Singular}, i.e.~$f_-(x_-) < f_+(x_+)$. Indeed, for $a < \frac{1}{2}$ we have
\[2a^{1-\gamma} < a^{-\gamma} < a^{-\gamma} +a, \]
so $a^{1-\gamma} -a < a^{-\gamma} - a^{1-\gamma}$ and
\[
f_-(x_-) = a \frac{a^{-\gamma} - 1}{a^{-\gamma} - a} <  a^{-\gamma} \frac{1 - a}{a^{-\gamma} - a} = f_+(x_+).
\]
Since a simple calculation shows $2^{\frac{1 - 4\gamma}{(\gamma - 1)(3/2 - \gamma)}} < \frac 1 2$ for $\gamma \in (1,\frac{3}{2})$, we see that all the systems with the probability vector $(p_-, p_+) = (\frac{1}{2},\frac{1}{2})$ covered by Theorem~\ref{thm:singularity_lyap} are of disjoint type.
\end{rem}

\begin{rem} Since the conditions used in the proof of Theorem~\ref{thm:singularity_lyap} to obtain the singularity of $\mu$ define open sets in the space of system parameters, it follows that the singularity of the stationary measure holds also for non-symmetric AM-systems with parameters close enough to the ones covered by Theorem~\ref{thm:singularity_lyap}. We leave the details to the reader.
\end{rem}

\section{Preliminaries}

We state some standard results from probability and ergodic theory, which we will use within the proofs. 

\begin{thm}[{\bf Hoeffding's inequality}{}]\label{thm:hoef}
Let $X_1, \ldots, X_n$ be independent bounded random variables and let $S_n = X_1 + \cdots + X_n$. Then for every $t > 0$,
\[
\PP(S_{n}-\mathbb {E} S_n\geq t)\leq \exp \left(-{\frac {2t^{2}}{\sum _{j=1}^{n}(\sup X_j - \inf X_j)^{2}}}\right)
\]
\end{thm}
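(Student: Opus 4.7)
The plan is to follow the classical Chernoff-bound approach, reducing the tail estimate to a moment generating function bound on each summand, which is Hoeffding's lemma. First I would apply Markov's inequality to the exponential of the centered sum: for any $\lambda > 0$,
\[
\PP(S_n - \E S_n \geq t) = \PP\bigl(e^{\lambda(S_n - \E S_n)} \geq e^{\lambda t}\bigr) \leq e^{-\lambda t}\,\E\bigl[e^{\lambda(S_n - \E S_n)}\bigr].
\]
Using independence of the $X_j$ (and hence of the centered variables $Y_j := X_j - \E X_j$), the expectation factors:
\[
\E\bigl[e^{\lambda(S_n - \E S_n)}\bigr] = \prod_{j=1}^n \E\bigl[e^{\lambda Y_j}\bigr].
\]

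The main technical step is Hoeffding's lemma, which bounds each factor. I would prove that if $Y$ is a random variable with $\E Y = 0$ and $a \leq Y \leq b$ almost surely, then
\[
\E[e^{\lambda Y}] \leq \exp\!\left(\frac{\lambda^2 (b-a)^2}{8}\right).
\]
The proof uses convexity of $x \mapsto e^{\lambda x}$: for $x \in [a,b]$, $e^{\lambda x} \leq \tfrac{b-x}{b-a}e^{\lambda a} + \tfrac{x-a}{b-a}e^{\lambda b}$. Taking expectations and using $\E Y = 0$ gives $\E[e^{\lambda Y}] \leq e^{\varphi(h)}$, where $h = \lambda(b-a)$, $q = -a/(b-a) \in [0,1]$, and $\varphi(h) = -hq + \log(1-q+qe^h)$. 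A direct computation shows $\varphi(0) = 0$, $\varphi'(0) = 0$, and $\varphi''(h) = r(1-r) \leq 1/4$ for $r = qe^h/(1-q+qe^h) \in [0,1]$, so Taylor's theorem with remainder yields $\varphi(h) \leq h^2/8$, completing the lemma.

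Applying Hoeffding's lemma to each $Y_j$, whose range is contained in an interval of length $\sup X_j - \inf X_j$, yields
\[
\E\bigl[e^{\lambda(S_n - \E S_n)}\bigr] \leq \exp\!\left(\frac{\lambda^2}{8}\sum_{j=1}^n (\sup X_j - \inf X_j)^2\right).
\]
Combining with the Markov step gives a bound of the form $\exp(-\lambda t + c\lambda^2)$ with $c = \tfrac18 \sum_j(\sup X_j - \inf X_j)^2$. The final step is to optimize over $\lambda > 0$: the minimum of $-\lambda t + c\lambda^2$ is attained at $\lambda^* = t/(2c) = 4t/\sum_j(\sup X_j - \inf X_j)^2$, yielding minimal value $-t^2/(4c) = -2t^2/\sum_j(\sup X_j - \inf X_j)^2$, which is exactly the exponent in the claimed bound. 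The only delicate point is Hoeffding's lemma itself — obtaining the sharp constant $1/8$ (rather than a crude subgaussian estimate) requires the second-derivative argument on $\varphi$; everything else is a routine application of Markov's inequality, independence, and one-variable calculus.
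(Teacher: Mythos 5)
Your proof is correct and is the classical Chernoff-bound argument via Hoeffding's lemma; the paper itself gives no proof of this statement but simply cites Hoeffding's original 1963 paper, where essentially this same argument appears. The only point worth noting is the degenerate case $q\in\{0,1\}$ in the lemma (i.e.\ $a=0$ or $b=0$), which forces $Y=0$ a.s.\ and is trivially handled.
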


\begin{thm}[{\bf Wald's identity}{}]\label{thm:wald}
Let $X_1, X_2, \ldots$ be independent identically distributed random variables with  finite expected value and let $N$ be a stopping time with $\mathbb{E} N < \infty$.  Then
\[
\mathbb{E} (X_1 + \cdots + X_N) = \mathbb{E} N \: \mathbb{E} X_1.
\]
\end{thm}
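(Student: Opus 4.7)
The plan is to write $S_N = X_1 + \cdots + X_N$ as an infinite sum of truncated terms and exploit the defining feature of a stopping time, namely that the event $\{N \geq n\}$ depends only on $X_1, \ldots, X_{n-1}$ and is therefore independent of $X_n$. Concretely, I would start from the identity
\[
S_N = \sum_{n=1}^{\infty} X_n \, \mathbf{1}_{\{N \geq n\}},
\]
which holds pointwise for every outcome. The stopping time hypothesis means $\{N \leq n-1\} \in \sigma(X_1, \ldots, X_{n-1})$, hence its complement $\{N \geq n\}$ is measurable with respect to $\sigma(X_1, \ldots, X_{n-1})$ and is independent of $X_n$ (since the $X_i$ are i.i.d., hence mutually independent). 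This gives
\[
\mathbb{E}\bigl(X_n \mathbf{1}_{\{N \geq n\}}\bigr) = \mathbb{E}(X_n)\, \mathbb{P}(N \geq n) = \mathbb{E}(X_1)\, \mathbb{P}(N \geq n),
\]
using also that the $X_i$ share a common distribution.

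Next I would justify interchanging expectation with the infinite sum. I would first handle the case $X_i \geq 0$, where Tonelli's theorem applies directly and yields
\[
\mathbb{E}(S_N) = \sum_{n=1}^{\infty} \mathbb{E}\bigl(X_n \mathbf{1}_{\{N \geq n\}}\bigr) = \mathbb{E}(X_1) \sum_{n=1}^{\infty} \mathbb{P}(N \geq n) = \mathbb{E}(X_1)\, \mathbb{E}(N),
\]
invoking the standard formula $\mathbb{E}(N) = \sum_{n \geq 1} \mathbb{P}(N \geq n)$ for the non-negative integer-valued random variable $N$. For the general case, I would apply this non-negative version to $|X_i|$ to obtain
\[
\mathbb{E}\sum_{n=1}^{N} |X_n| = \mathbb{E}(|X_1|)\, \mathbb{E}(N) < \infty,
\]
which shows that $\sum_{n=1}^{\infty} X_n \mathbf{1}_{\{N \geq n\}}$ converges absolutely in $L^1$. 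This integrable dominating function lets me apply Fubini's theorem (or dominated convergence on the partial sums $\sum_{n=1}^{k} X_n \mathbf{1}_{\{N \geq n\}}$, which converge to $S_N$ as $k \to \infty$) and obtain the same identity without sign restrictions.

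The only delicate step is the independence argument, since one must resist the tempting but wrong idea of computing $\mathbb{E}(S_N)$ by conditioning on $\{N = n\}$ (on which $X_n$ is no longer independent of the preceding variables). The stopping-time formulation is precisely what rescues this: independence of $X_n$ from $\mathbf{1}_{\{N \geq n\}}$, rather than from $\mathbf{1}_{\{N = n\}}$, is the correct fact. Once this is clearly stated, the rest is a routine application of Tonelli/Fubini, with the finiteness of $\mathbb{E}|X_1|\,\mathbb{E}(N)$ providing the integrable bound needed to legitimise all interchanges.
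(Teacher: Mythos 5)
Your proof is correct and complete: the decomposition $S_N=\sum_{n\ge 1}X_n\mathbf{1}_{\{N\ge n\}}$, the independence of $X_n$ from $\{N\ge n\}=\{N\le n-1\}^c$, and the Tonelli-then-Fubini passage via the nonnegative case applied to $|X_i|$ constitute the standard argument. The paper does not prove this statement itself but cites it from Feller's book, and your argument is essentially that classical proof, including the correct emphasis on conditioning on $\{N\ge n\}$ rather than $\{N=n\}$.
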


\begin{thm}[{\bf Kac's lemma}{}]\label{thm:kac}
Let $F\colon X \to X$ be a measurable $\mu$-invariant ergodic transformation of a probability space $(X, \mu)$ and let $A \subset X$ be a measurable set  with $\mu(A) > 0$. Then
\[
\int \limits_A n_A \, d\mu_A = \frac{1}{\mu(A)},
\]
where 
\[
n_A \colon X \to \N \cup \{\infty\}, \qquad n_A(x) = \inf\{n \ge 1: F^n(x) \in A\}
\]
is the first return time to $A$ and $\mu_A = \frac{1}{\mu(A)} \mu|_A$.
\end{thm}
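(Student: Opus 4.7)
My plan is the classical ``tower over $A$'' decomposition, relying only on two ingredients: (a) Poincar\'e recurrence and ergodicity to guarantee that the relevant return/hitting times are almost surely finite, and (b) $F$-invariance of $\mu$ applied to a telescoping partition of $X$.

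First, I would verify that $n_A < \infty$ $\mu_A$-a.e.\ (this is Poincar\'e recurrence, which needs only $F$-invariance and $\mu(X) < \infty$) and that the first hitting time $\tau(x) := \inf\{k \geq 0 : F^k(x) \in A\}$ is finite for $\mu$-a.e.\ $x \in X$ (this is Birkhoff's theorem applied to $\mathbf{1}_A$, together with $\mu(A) > 0$). This legitimises the two partitions
\[
A = \bigsqcup_{n \geq 1} A_n, \quad A_n := \{x \in A : n_A(x) = n\}, \qquad X = \bigsqcup_{k \geq 0} \tau^{-1}(k),
\]
up to $\mu$-null sets, and reduces the conclusion to proving $\sum_{n \geq 1} n\, \mu(A_n) = 1$, since $\int_A n_A \, d\mu = \sum_{n \geq 1} n\, \mu(A_n)$.

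The heart of the argument is the disjoint-union identity
\[
F^{-1}(\tau^{-1}(k-1)) = A_k \sqcup \tau^{-1}(k), \qquad k \geq 1.
\]
Unpacking definitions, $x \in F^{-1}(\tau^{-1}(k-1))$ iff $F^k(x) \in A$ and $F^j(x) \notin A$ for $1 \leq j < k$; splitting according to whether $x \in A$ (which then forces $n_A(x) = k$, so $x \in A_k$) or $x \notin A$ (in which case also $F^j(x) \notin A$ for $0 \leq j < k$, so $\tau(x) = k$) produces exactly the two disjoint pieces on the right. Taking $\mu$ of both sides and using $F$-invariance yields the recursion $\mu(\tau^{-1}(k)) = \mu(\tau^{-1}(k-1)) - \mu(A_k)$, anchored at $\mu(\tau^{-1}(0)) = \mu(A) = \sum_{j \geq 1} \mu(A_j)$. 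Solving telescopically gives $\mu(\tau^{-1}(k)) = \sum_{j > k} \mu(A_j)$.

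Finally, summing over $k$ and exchanging the order of summation,
\[
1 = \mu(X) = \sum_{k \geq 0} \mu(\tau^{-1}(k)) = \sum_{k \geq 0}\sum_{j > k} \mu(A_j) = \sum_{j \geq 1} j\, \mu(A_j) = \int_A n_A \, d\mu,
\]
and dividing by $\mu(A)$ yields the stated identity. I do not anticipate a genuine obstacle: the only point requiring a moment's care is the disjointness in the tower identity, which must be checked separately on $A$ and on $X \setminus A$. Working with preimages $F^{-1}(\cdot)$ rather than images ensures that the whole argument goes through without any assumption that $F$ be invertible.
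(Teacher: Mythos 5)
Your proof is correct and complete. The paper does not prove this statement at all---it is quoted as a standard preliminary and referred to Petersen's book---so there is nothing to compare line by line; what you give is the classical tower/telescoping argument, and every step checks out: Poincar\'e recurrence gives $n_A<\infty$ a.e.\ on $A$, Birkhoff plus ergodicity gives $\tau<\infty$ a.e.\ on $X$ (this is genuinely where ergodicity enters, and it cannot be dropped), the identity $F^{-1}(\tau^{-1}(k-1))=A_k\sqcup\tau^{-1}(k)$ is verified correctly on $A$ and on $X\setminus A$, and the final interchange of sums is legitimate by nonnegativity. It is worth noting that your insistence on working with preimages, so that invertibility of $F$ is never used, is exactly what is needed here: the paper applies Kac's lemma to the one-sided step skew product $\FF^+$, which is not invertible.
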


For the proofs of these results refer to, respectively, \cite[Theorem 2]{Hoeffding_inequality}, \cite[Chapter XII, Theorem 2]{Feller}, \cite[Theorem 4.6]{petersen}.

\section{Proofs}

As noted in the introduction, the proof of Theorem \ref{thm:singularity_lyap} is based on an upper bound on the Hausdorff dimension of a stationary measure in terms of its entropy and Lyapunov exponent, in a version proved by Jaroszewska and Rams in \cite[Theorem 1]{rams_jaroszewska}. Consider a symmetric AM-system $\{f_-, f_+\}$ of the form \eqref{eq:AM_new_param} with positive endpoint Lyapunov exponents for some probability vector $(p_-,p_+)$, and its stationary measure $\mu$. Recall that the entropy of $(p_-,p_+)$ is defined by
\[H(p_-, p_+) = -p_-\log p_- -  p_+\log p_+,\]
while
\[ \chi(\mu) = \int \limits_{[0,1]} ( p_- \log f'_-(x) + p_+ \log f'_+(x) )d\mu(x) \]
is the Lyapunov exponent of $\mu$. As $\mu$ is non-atomic (see \cite[Proposition 3.11]{BS19Singular}) and $f_-, f_+$ are differentiable everywhere except for the points $x_-, x_+$, the Lyapunov exponent $\chi(\mu)$ is well-defined. Moreover, $\mu$ is ergodic (see \cite[Lemmas 3.2, 3.4]{homburg}). It follows that we can use \cite[Theorem 1]{rams_jaroszewska} which asserts that
\begin{equation}\label{eq:entropy_lyap} \dim_H \mu \leq -\frac{H(p_-, p_+)}{\chi(\mu)}
\end{equation}
as long as $\chi(\mu) < 0$.

Now we proceed with the details. Let
\[
M = [x_+, x_-], \qquad L = [x_+, f_-^{-1}(x_+)),\qquad  R=\II(L) = (f_+^{-1}(x_-), x_-].
\]
It follows from \eqref{x+<x-} that these intervals are well-defined. 
Note that $M, L, R$ depend on parameters $a$ and $\gamma$, but we suppress this dependence in the notation. To estimate the Hausdorff dimension of $\mu$, we find an upper bound for $\chi(\mu)$ in terms of $\mu(M)$ and estimate $\mu(M)$ from below. To this aim, we need the disjointness of the intervals $L,R$. The following lemma provides the range of parameters for which this condition holds.

\begin{lem}\label{lem:LR}
The following assertions are equivalent.
\begin{enumerate}[\rm (a)]
\item $\overline L \cap \overline R = \emptyset$,
\item $x_+ < f_-(\frac{1}{2})$,
\item $\gamma >1 - \frac{\log(a^2 - 2a +2)}{\log a}$.
\end{enumerate}
\end{lem}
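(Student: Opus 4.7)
The overall strategy is to reduce everything to a single quantity, namely $f_-^{-1}(x_+)$, using the reflection symmetry \eqref{eq:sym}, and then perform a direct algebraic rearrangement.

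First I would use the symmetry $f_+ = \II \circ f_- \circ \II$ together with $\II(x_+) = x_-$ (which is immediate from $x_+ + x_- = 1$, itself visible from the formulas for $x_\pm$). Applying $f_+$ to $1 - f_-^{-1}(x_+)$ gives $\II(f_-(f_-^{-1}(x_+))) = \II(x_+) = x_-$, so
\[
f_+^{-1}(x_-) = 1 - f_-^{-1}(x_+).
\]
This is the key identity: both endpoints appearing in the definitions of $L$ and $R$ are determined by the single number $u := f_-^{-1}(x_+)$, with $\overline L = [x_+, u]$ and $\overline R = [1-u, x_-]$. Since $x_+ < 1/2 < x_-$ (from \eqref{x+<x-} and $x_++x_- = 1$), the two closed intervals lie on opposite sides of $1/2$ up to the position of $u$, and a brief order argument gives
\[
\overline L \cap \overline R = \emptyset \iff u < 1 - u \iff u < \tfrac{1}{2}.
\]
Applying the (strictly increasing) map $f_-$ and using $f_-(u) = x_+$, this is equivalent to $x_+ < f_-(\tfrac{1}{2})$, which is (b).

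Next, to pass from (b) to (c), I would evaluate $f_-(\tfrac{1}{2})$ explicitly. Under \eqref{eq:gamma} we have $\gamma > 1$, and hence (as noted above) $x_- > 1/2$, so $\tfrac{1}{2} \in [0, x_-]$ lies in the linear branch of $f_-$. Therefore $f_-(\tfrac{1}{2}) = a/2$, and (b) becomes
\[
\frac{1-a}{a^{-\gamma} - a} < \frac{a}{2}.
\]
Since $a^{-\gamma} - a > 0$, multiplying out and rearranging yields $a^{1-\gamma} > a^2 - 2a + 2$. Taking logarithms and dividing by $\log a < 0$ (which flips the inequality) gives exactly
\[
\gamma > 1 - \frac{\log(a^2 - 2a + 2)}{\log a},
\]
i.e.\ condition (c). Each step in this algebraic chain is reversible, so (c) $\Rightarrow$ (b) as well, and the three conditions are equivalent.

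The only place requiring any care is the computation of $f_-(\tfrac{1}{2})$, which uses the correct branch of the piecewise definition; this is where $\gamma > 1$ (equivalently $x_+ < \tfrac{1}{2} < x_-$) enters, and it is an assumption we have in force throughout the paper. No genuine obstacle arises here.
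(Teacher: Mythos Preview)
Your proposal is correct and follows essentially the same approach as the paper's own proof: both arguments use the symmetry to obtain $f_+^{-1}(x_-) = 1 - f_-^{-1}(x_+)$, reduce (a) to the condition $f_-^{-1}(x_+) < \tfrac12$, and then apply $f_-$ to arrive at (b); the equivalence (b) $\Leftrightarrow$ (c) is handled in both cases by the same direct algebraic rearrangement using $f_-(\tfrac12) = a/2$.
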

\begin{proof} By \eqref{x+<x-} and the fact $x_+ = \II(x_-)$, we have $\frac{1}{2} < x_-$, so $f_-(\frac{1}{2}) = \frac{a}{2}$ and (b) becomes $x_+ < \frac{a}{2}$. Then a direct computation yields the equivalence of (b) and (c). Furthermore, by \eqref{x+<x-}, the condition (a) holds if and only if $f_-^{-1}(x_+) < f_+^{-1}(x_-)$. As $f_-\circ \II =  \II \circ f_+$, this is equivalent to $f_-^{-1}(x_+) < \II(f_-^{-1}(x_+))$, which is the same as $f_-^{-1}(x_+) < 1/2$. Applying $f_-$ to both sides, we arrive at (b).
\end{proof}

\begin{rem}\label{rem:LR}
The condition Lemma~\ref{lem:LR}(c) can be written as $\gamma - 1 > - \frac{\log((1-a)^2 + 1)}{\log a}$. As $\log((1-a)^2 + 1) < \log 2$ for $a \in (0, 1)$, we see that the condition is satisfied provided $\gamma > 1$, $a \in (0, 2^{\frac{1}{1 - \gamma}})$.
\end{rem}

We can now estimate the measure of $M$. It is convenient to use the notation
\[
p = \max(p_-, p_+).
\]
Obviously, $p \in [\frac 1 2, 1)$. Note that the condition \eqref{eq:gamma} for the positivity of the endpoint Lyapunov exponents can be written as
\begin{equation}\label{eq:gamma'}
\gamma > \frac{p}{1-p}
\end{equation}
and the entropy of $(p_-, p_+)$ is equal to
\[
H(p) = -p\log p -  (1-p)\log (1-p).
\]
The following lemma provides a lower bound for $\mu(M)$.

\begin{lem}\label{eq:middle_below}
	Let $a \in (0,1)$, $\gamma > 1$ and $p \in [\frac{1}{2}, 1)$ satisfy the conditions \eqref{eq:gamma'} and Lemma~{\rm \ref{lem:LR}(c)}.  Then
	\begin{equation*} \mu(M) \geq \frac{\gamma (1 - p) - p}{\gamma - p(1 - p)}.
	\end{equation*}
\end{lem}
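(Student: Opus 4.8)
The plan is to estimate $\mu(M)$ from below by relating the stationary measure of the middle interval $M = [x_+, x_-]$ to the return dynamics of the induced process. First I would exploit the stationarity equation $\mu = p_-(f_-)_*\mu + p_+(f_+)_*\mu$ together with the structure of the maps in \eqref{eq:AM_new_param}: on $[0,x_-]$ the map $f_-$ is the linear contraction $x \mapsto ax$, while on $[0,x_+]$ the map $f_+$ is the linear expansion $x \mapsto a^{-\gamma}x$, and symmetric statements hold near $1$ via \eqref{eq:sym}. The complement of $M$ in $[0,1]$ splits into the left part $[0,x_+)$ and the right part $(x_-,1]$, which are interchanged by $\II$; by the symmetry \eqref{eq:sym} of the system and uniqueness of $\mu$, we have $\mu = \II_*\mu$, so $\mu([0,x_+)) = \mu((x_-,1])$ and it suffices to control $\mu([0,x_+))$.

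The key mechanism is that, starting from a point in $[0,x_+)$, the only way to leave this interval is to apply $f_+$ (which multiplies by $a^{-\gamma} > 1$), while applying $f_-$ keeps one inside $[0,x_+)$ and multiplies the coordinate by $a < 1$; thus the exit from $[0,x_+)$ to the right is governed by a random walk in $\log$-coordinates with drift. I would set up the first-passage analysis: for $x \in [0,x_+)$, the number of consecutive (in the appropriate ordered sense) applications needed to cross $x_+$ from the left, together with the probabilities $p_-$ of staying and $p_+$ of making the expanding step, yields a geometric-type bound. Concretely, one can estimate the $\mu$-measure of the sets $f_-^n([0,x_+)) \cap \ldots$ and sum a geometric series with ratio related to $p$ and $\gamma$; the condition \eqref{eq:gamma'}, i.e. $\gamma > p/(1-p)$, is exactly what guarantees the relevant series converges (this is the positivity of the endpoint Lyapunov exponent $\Lambda(0)$ rephrased), and Lemma~\ref{lem:LR}(c) ensures $L$ and $R$ are disjoint so the left and right excursions do not interfere.

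More precisely, I expect the cleanest route is: write the stationarity relation restricted to $[0,x_+)$ as a fixed-point equation, use that $f_-^{-1}([0,x_+)) \supset [0,x_+)$ and $f_+^{-1}([0,x_+)) \cap [0,1]$ is a small interval near $0$ of the form $[0, a^{\gamma}x_+)$, and iterate to get $\mu([0,x_+)) \le \sum_{n\ge 0}$ (something like) $p_-^{?}$ times $\mu$ of shrinking intervals, eventually bounding $\mu([0,x_+))$ by a closed-form expression in $p$ and $\gamma$. Passing from the bound on $\mu([0,x_+)) = \mu((x_-,1])$ to $\mu(M) = 1 - 2\mu([0,x_+))$ then gives $\mu(M) \ge \dfrac{\gamma(1-p) - p}{\gamma - p(1-p)}$ after algebraic simplification; note the numerator $\gamma(1-p)-p$ is positive precisely under \eqref{eq:gamma'}, which is the expected consistency check.

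The main obstacle will be making the first-passage/geometric-series argument rigorous rather than heuristic: one must carefully account for the fact that after applying $f_+$ from $[0,x_+)$ the image can land anywhere in $[0,1]$ (not just back near the relevant region), and that the natural ``return'' structure mixes the two endpoints. I anticipate handling this by working with the skew product $\FF^+$ and a carefully chosen induced (first-return) map on $[0,1]\setminus M$, applying Kac's lemma (Theorem~\ref{thm:kac}) and Wald's identity (Theorem~\ref{thm:wald}) to turn expected return times into the measure estimate, with the disjointness of $L$ and $R$ from Lemma~\ref{lem:LR} ensuring the bookkeeping on which endpoint one is near stays consistent. The resonance-free, purely combinatorial nature of the estimate means no delicate number theory enters here — the difficulty is entirely in the clean organization of the excursion decomposition.
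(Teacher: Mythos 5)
Your final paragraph lands on the paper's actual strategy: induce via Kac's lemma, analyse the excursions outside $M$ as a random walk in logarithmic coordinates, and use Wald's identity. (One correction of detail: the paper applies Kac's lemma to $\Sigma_2^+\times M$ itself, not to the complement of $M$; an upper bound on the expected return time to $M$ then directly yields the lower bound on $\mu(M)$.) However, two of your steps are genuinely problematic. First, the symmetry reduction $\mu=\II_*\mu$ is false unless $p_-=p_+$: since $f_\pm=\II\circ f_\mp\circ\II$, the measure $\II_*\mu$ is stationary for the system with the probabilities \emph{swapped}, so by uniqueness $\II_*\mu=\mu$ only in the symmetric case. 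Hence $\mu([0,x_+))=\mu((x_-,1])$ and the identity $\mu(M)=1-2\mu([0,x_+))$ cannot be used for general $p\in[\tfrac12,1)$. The paper avoids this by estimating the two excursion types separately (expected return times $\mathbb{E}_-N_-$ from $x_+$ and $\mathbb{E}_+N_+$ from $x_-$) and taking the maximum, which is where the single parameter $p=\max(p_-,p_+)$ enters.

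Second, and more importantly, the quantitative heart of the argument is missing. Wald's identity gives $\mathbb{E}_-S_{N_-}=(p_--\gamma p_+)(\mathbb{E}_-N_--1)$, but to turn this into an \emph{upper} bound on $\mathbb{E}_-N_-$ one needs a \emph{lower} bound on the expected overshoot $\mathbb{E}_-S_{N_-}$; the paper gets $S_{N_-}\ge -1-\gamma$ by conditioning on the first step (each step is either $+1$ or $-\gamma$, and the walk stops the first time it drops below $-1$), yielding $\mathbb{E}_-S_{N_-}\ge -p_--\gamma$ and hence $\mathbb{E}_-N_--1\le\frac{p_-+\gamma}{\gamma p_+-p_-}$. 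Without this overshoot bound you do not obtain the specific constant $\frac{\gamma(1-p)-p}{\gamma-p(1-p)}$. Your alternative ``geometric series'' route via the stationarity equation only gives $\mu([0,x_+))\ge p_-\mu([0,x_+))+\dots$, which bounds the measure from the wrong side. Several supporting steps are also needed to legitimize the machinery and are not mentioned: $\mu(M)>0$ (so Kac applies), $\mathbb{E}_-N_-<\infty$ (via Hoeffding's inequality, so Wald applies), the monotonicity argument reducing the return time of an arbitrary point of $L$ (resp.\ $R$) to that of the endpoint $x_+$ (resp.\ $x_-$), and the bound $\nu(E)\le p$ on the measure of the set of points that actually leave $M$ in one step.
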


Before giving the proof of Lemma~{\rm\ref{eq:middle_below}}, let us explain how it implies Theorem \ref{thm:singularity_lyap}. Suppose the lemma is true. Then we can estimate the Lyapunov exponent $\chi(\mu)$ in the following way.

\begin{cor}\label{cor:Lyap}
Let $a \in (0,1)$, $\gamma > 1$ and $p \in [\frac{1}{2}, 1)$ satisfy the conditions \eqref{eq:gamma'} and Lemma~{\rm \ref{lem:LR}(c)}. Then
\[
\chi(\mu) \le \Big(1 - \frac{(1+\gamma)p^2(p+\gamma)}{\gamma - p(1-p)}\Big) \log a.
\]
\end{cor}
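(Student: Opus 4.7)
The plan is to compute $\chi(\mu)$ explicitly by decomposing $[0,1]$ into the three pieces on which $f'_-$ and $f'_+$ are constant, then exploit the bound $p_\pm \le p$ together with Lemma~\ref{eq:middle_below}.

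First I would observe that by definition of $f_-, f_+$ in \eqref{eq:AM_new_param}, the derivatives take only two values, namely $a$ or $a^{-\gamma}$. Concretely, using $x_+ < x_-$ from \eqref{x+<x-},
\[
p_-\log f'_-(x) + p_+\log f'_+(x) = \begin{cases} (p_- - \gamma p_+)\log a & \text{for } x \in [0,x_+),\\ \log a & \text{for } x \in (x_+, x_-),\\ (p_+ - \gamma p_-)\log a & \text{for } x \in (x_-, 1]. \end{cases}
\]
Since $\mu$ is non-atomic, the three sets have total $\mu$-mass equal to $1$, so writing $\alpha = \mu([0,x_+))$ and $\beta = \mu((x_-,1])$ (so that $\mu(M) = 1 - \alpha - \beta$), I would integrate to obtain
\[
\chi(\mu) = \bigl[\,\alpha(p_- - \gamma p_+) + (1-\alpha-\beta) + \beta(p_+ - \gamma p_-)\,\bigr]\log a.
\]
A short rearrangement, using $p_- + p_+ = 1$ to simplify $1 - p_\mp + \gamma p_\pm = (1+\gamma)p_\pm$, reduces the bracket to
\[
1 - (1+\gamma)\bigl(\alpha p_+ + \beta p_-\bigr).
\]

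Next, since $p = \max(p_-, p_+)$, we have $\alpha p_+ + \beta p_- \le p(\alpha + \beta) = p(1 - \mu(M))$. Combining with Lemma~\ref{eq:middle_below}, which under the hypotheses of the corollary gives $\mu(M) \ge \frac{\gamma(1-p)-p}{\gamma - p(1-p)}$, I would compute
\[
1 - \mu(M) \le \frac{p(\gamma + p)}{\gamma - p(1-p)}, \qquad \text{hence} \qquad \alpha p_+ + \beta p_- \le \frac{p^2(\gamma+p)}{\gamma - p(1-p)}.
\]
Plugging this into the expression for $\chi(\mu)$ and remembering that $\log a < 0$ (so the inequality for the bracket reverses sign upon multiplication) yields exactly
\[
\chi(\mu) \le \Big(1 - \frac{(1+\gamma)p^2(p+\gamma)}{\gamma - p(1-p)}\Big)\log a,
\]
as required.

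The argument is essentially bookkeeping: the only nontrivial input is the lower bound on $\mu(M)$ from Lemma~\ref{eq:middle_below}, which is already granted. The steps that require care are keeping track of the orientation of inequalities (because $\log a < 0$) and using $p_- + p_+ = 1$ to collapse $1 - p_\mp + \gamma p_\pm$ to $(1+\gamma)p_\pm$; no estimate beyond these, and the trivial $\alpha p_+ + \beta p_- \le p(\alpha+\beta)$, is needed.
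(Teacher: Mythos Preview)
Your proof is correct and follows essentially the same route as the paper: both compute $\chi(\mu)$ via the three-interval decomposition to get \eqref{eq:chi formula}, bound $\alpha p_+ + \beta p_- \le p(1-\mu(M))$ (the paper phrases this as ``computing the maximum under the constraint $\alpha+\beta=1-\mu(M)$''), and then insert the lower bound on $\mu(M)$ from Lemma~\ref{eq:middle_below}. Your write-up just makes the algebra --- in particular the identity $1 - p_\mp + \gamma p_\pm = (1+\gamma)p_\pm$ --- more explicit than the paper does.
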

\begin{proof}
By definition, we have
\begin{equation}\label{eq:chi formula}
\chi(\mu) =  (\mu (M) + (p_- - \gamma p_+)\mu([0, x_+]) + (p_+ - \gamma p_-) \mu([x_-, 1])) \log a.
\end{equation}
Computing the maximum of this expression under the condition $\mu([0, x_+]) + \mu([x_-, 1]) = 1 - \mu(M)$, we obtain
\[
\chi(\mu) \le (1 - (1+ \gamma) p (1 - \mu(M)))\log a.
\]
Then Lemma~{\rm\ref{eq:middle_below}} provides the required estimate by a direct computation.
\end{proof}

\begin{proof}[Proof of Theorem \rm \ref{thm:singularity_lyap}]
Let $a \in (0,1)$, $\gamma > 1$ and $p \in [\frac{1}{2}, 1)$ satisfy the conditions \eqref{eq:gamma'} and Lemma~{\rm \ref{lem:LR}(c)}. By Corollary~\ref{cor:Lyap}, we have $\chi(\mu) < 0$ provided 
\begin{equation}\label{eq:p,gamma}
\frac{(1+\gamma)p^2(p+\gamma)}{\gamma - p(1-p)} < 1.
\end{equation}
Hence, applying \eqref{eq:entropy_lyap} and Corollary~\ref{cor:Lyap}, we obtain
\begin{equation}\label{eq:dim<}
\dim_H \mu \leq \frac{p\log p + (1-p)\log (1-p)}{\Big(1 - \frac{(1+\gamma)p^2(p+\gamma)}{\gamma - p(1-p)}\Big) \log a}
\end{equation}
as long as \eqref{eq:p,gamma} is satisfied. If, additionally,
\begin{equation}\label{eq:p,gamma,a}
p\log p + (1-p)\log (1-p) > \Big(1 - \frac{(1+\gamma)p^2(p+\gamma)}{\gamma - p(1-p)}\Big) \log a,
\end{equation}
then \eqref{eq:dim<} provides $\dim_H \mu < 1$. We conclude that the conditions required for $\dim_H \mu < 1$ are \eqref{eq:gamma'}, \eqref{eq:p,gamma}, Lemma~{\rm \ref{lem:LR}(c)} and \eqref{eq:p,gamma,a}. 

To find the range of allowable parameters, consider first the case $p_- = p_+ = \frac 1 2$ (which corresponds to $p = \frac 1 2$). Then the condition \eqref{eq:gamma'} is equivalent to $\gamma > 1$, while the inequality \eqref{eq:p,gamma} takes the form $2\gamma^2 - 5 \gamma +3 < 0$ and is satisfied for $\gamma \in (1, \frac{3}{2})$. Furthermore, by Remark~\ref{rem:LR}, the condition Lemma~{\rm \ref{lem:LR}(c)} is fulfilled for $\gamma > 1$, $a \in (0, 2^{\frac{1}{1 -\gamma}})$. 
The condition \eqref{eq:p,gamma,a} can be written as $\frac{(1 - 4 \gamma) \log 2}{(\gamma - 1)(3/2 - \gamma) \log a} < 1$, which is  equivalent to
\[
a < 2^{\frac{1 - 4\gamma}{(\gamma - 1)(3/2 - \gamma)}}.
\]
A direct computation shows  $2^{\frac{1 - 4\gamma}{(\gamma - 1)(\frac{3}{2} - \gamma)}} < 2^{\frac{1}{1 - \gamma}}$ for $\gamma \in (1,\frac{3}{2})$. By \eqref{eq:dim<}, we conclude that in the case $p_- = p_+ = \frac 1 2$ we have
\[
\dim_H \mu \leq \frac{(1 - 4 \gamma) \log 2}{(\gamma - 1)(3/2 - \gamma) \log a} < 1
\]
for $\gamma \in (1,\frac{3}{2})$, $a \in \Big(0, 2^{\frac{1 - 4\gamma}{(\gamma - 1)(3/2 - \gamma)}}\Big)$.

Suppose now that $(p_-, p_+)$ is a probability vector with $p <  \frac 1 2 + \delta$ for a small $\delta > 0$. Note that the functions appearing in \eqref{eq:gamma'} and  \eqref{eq:p,gamma} are well-defined and continuous for $\gamma \in (1,\frac{3}{2})$ and $p$ in a neighbourhood of $\frac 1 2$. Hence, \eqref{eq:gamma'} and \eqref{eq:p,gamma} are fulfilled for $\gamma\in J_{p_-, p_+} = J_p$, where $J_p \subset (1, \frac{3}{2})$ is an interval slightly smaller than $(1, \frac{3}{2})$, depending continuously on $p \in [\frac 1 2, \frac 1 2 + \delta)$. Furthermore, if $\gamma\in J_p$, then the conditions Lemma~{\rm \ref{lem:LR}(c)} and \eqref{eq:p,gamma,a} hold for sufficiently small $a > 0$, where an upper bound for $a$ can be taken to be a continuous function of $\gamma$, which does not depend on $p$. By \eqref{eq:dim<}, we have
\[
\dim_H \mu \leq \frac{p\log p + (1-p)\log (1-p)}{\Big(1 - \frac{(1+\gamma)p^2(p+\gamma)}{\gamma - p(1-p)}\Big) \log a} < 1
\]
for $p \in [\frac 1 2, \frac 1 2 + \delta$), $\gamma\in J_{p}$ and sufficiently small $a > 0$ (with a bound depending continuously on $\gamma$).
In fact, analysing the inequalities \eqref{eq:gamma'} \eqref{eq:p,gamma}, Lemma~\ref{lem:LR}(c) and \eqref{eq:p,gamma,a}, one can obtain concrete ranges of  parameters $a, \gamma, p$, for which $\dim_H \mu < 1$ (cf.~Remark~\ref{rem:range} and Figure~\ref{fig:p-gamma}). 
\end{proof}

To complete the proof of Theorem~\ref{thm:singularity_lyap}, it remains to prove Lemma~{\rm \ref{eq:middle_below}}.

\begin{proof}[Proof of Lemma \rm \ref{eq:middle_below}]
		The proof is based on Kac's lemma (see Theorem~\ref{thm:kac}) and the observation that outside of the interval $M$, the system $\{f_-, f_+\}$ (after a logarithmic change of coordinates) acts like a random walk with a drift. Note first that $\mu(M) > 0$. Indeed, we have
	\begin{equation}\label{eq:f+x+}
	f_+^{-1}(x_-) > x_+,
	\end{equation}
	as it is straightforward to check that this inequality is equivalent to $a^{1 - \gamma} > 1$, which holds since $a \in (0,1)$ and $\gamma > 1$. This means that the sets $M$ and $f_+^{-1}(M)$ are not disjoint. By symmetry, $M$ and $f_-^{-1}(M)$ are also not disjoint. As $\lim \limits_{n \to 
		\infty} f_+^{-n}(x_-) = 0$ and $\lim \limits_{n \to \infty} f_-^{-n}(x_+) = 1$, we see that $\bigcup \limits _{n = 0}^{\infty} f_+^{-n}(M) \cup f_-^{-n}(M) = (0,1)$ and hence $\mu(M)>0$, as $\mu$ is stationary and $\mu(\{0,1\})=0$.
	
	We will apply Kac's lemma to the step skew product \eqref{eq:skew} and the set $\Sigma_2^+ \times M$. Let $n_{M} \colon \Sigma_2^+ \times M \to \N \cup \{ \infty \}$ be the first return time to $\Sigma_2^+ \times M$, i.e.
	\[ n_{M} (\ii, x) = \inf \{ n \geq 1 : (\FF^+)^n(\ii, x) \in \Sigma_2^+ \times M \}. \]
Set $\PP = \Ber^+_{p_-, p_+}$ to be the $(p_-, p_+)$-Bernoulli measure on $\Sigma_2^+$. Since $\PP \otimes \mu$ is invariant and ergodic for $\FF^+$ (cf. \cite[Lemmas 3.2 and A.2]{homburg}) and $(\PP \otimes \mu) (\Sigma_2^+ \times M) = \mu(M)>0$, Kac's lemma  implies 
	\begin{equation}\label{eq:kac}
	\int \limits_{\Sigma_2^+ \times M} n_{M}\, d\nu = \frac{1}{\mu(M)},
	\end{equation}
	where 
	\[\nu = \frac{1}{\mu(M)} (\PP \otimes \mu)|_{\Sigma_2^+ \times M}.\]
	Recall that we assume the condition Lemma~\ref{lem:LR}(c), so $\overline L \cap \overline R = \emptyset$. Let 
	\[
	C = [\sup L, \inf R],
	\]
	so that $M = L \cup C \cup R$ with the union being disjoint. By the definitions of $L, C$ and $R$,
	\begin{equation}\label{eq:exit} f_-(L) \subset [0, x_+), \qquad f_-(C \cup R) \cup f_+(L \cup C) \subset M, \qquad f_+(R) \subset (x_-, 1].
	\end{equation}
	Let \[E = \{ (\ii, x) \in \Sigma_2^+ \times M : f_{i_1}(x) \notin M \} = \{ (\ii, x) \in \Sigma_2^+ \times M : n_{M} > 1 \}.\]
	It follows from \eqref{eq:exit} that
	\begin{equation}\label{eq:E}
	E = \{i_1 = -\} \times L \cup \{ i_1 = + \} \times R,
	\end{equation}
	so
	\begin{equation}\label{eq:E_measure}
	\nu(E) = \frac{p_- \:\mu(L) + p_+ \:\mu(R)}{\mu(M)}
	\end{equation}
	and as $L$, $R$ are disjoint subsets of $M$,
	\begin{equation}\label{eq:half}
	\nu(E) \le p\frac{\mu(L) + \mu(R)}{\mu(M)} \leq p
	\end{equation}
for $p = \max(p_-, p_+)$.
	By \eqref{eq:E}, 
	\begin{equation}\label{eq:return_int1} 
		\int \limits_{\Sigma_2^+ \times M} n_{M} \, d\nu = 1 - \nu (E)  +  \int \limits_E n_{M} \, d\nu = 1 - \nu (E)  +  \int \limits_{\{i_1 = -\} \times L} n_{M} \, d\nu +  \int \limits_{\{i_1 = +\} \times R} n_{M} \, d\nu.
	\end{equation}
		Note that it follows from \eqref{eq:f+x+} that $f_+(x_+) < x_-$, hence a trajectory $\{f_{i_n} \circ \cdots \circ f_{i_1}(x)\}_{n=0}^\infty$ of a point $x \in [0,1]$ cannot jump from $[0, x_+)$ to $(x_-, 1]$ (or vice versa) without passing through $M$. Combining this observation with the fact that the transformations $f_-$ and $f_+$ are increasing, we conclude that 
	\begin{equation}\label{eq:n}
	\begin{aligned}
	n_{M}(\ii, x) &\leq n_{M}(\ii, x_+) \quad \text{for }(\ii, x) \in \{i_1 = -\} \times L,\\
	n_{M}(\ii, x) &\leq n_{M}(\ii, x_-) \quad \text{for }  (\ii, x) \in \{i_1 = +\} \times R.
	\end{aligned}
	\end{equation}
Therefore, we can apply \eqref{eq:n} together with \eqref{eq:E_measure} to obtain
	\begin{equation}\label{eq:return_int2} 
	\begin{aligned}
	&\int \limits_{\{i_1 = -\} \times L} n_{M} \, d\nu +  \int \limits_{\{i_1 = +\} \times R} n_{M} \, d\nu \leq  \int\limits_{\{i_1 = -\} \times L} n_{M}(\ii, x_+)d\nu + \int\limits_{\{i_1 = +\} \times R} n_{M}(\ii, x_-)d\nu\\
	& = \frac{\mu(L)}{\mu(M)} \int\limits_{\{i_1 = -\}} n_{M}(\ii, x_+)d\PP(\ii) + \frac{\mu(R)}{\mu(M)} \int\limits_{\{i_1 = +\}} n_{M}(\ii, x_-)d\PP(\ii)\\
	& = p_-\:\frac{\mu(L)}{\mu(M)}\mathbb{E}_-N_- + p_+\:\frac{\mu(R)}{\mu(M)}\mathbb{E}_+N_+\le\nu(E) \max(\mathbb{E}_-N_-, \mathbb{E}_+N_+),
	\end{aligned}
	\end{equation}
where 
\[
N_\pm(\ii) = \inf \{ n \geq 1 : f_{i_n} \circ \ldots \circ f_{i_1} (x_\mp) \in M \}
\]
and $\mathbb{E}_\pm$ is the expectation taken with respect to the conditional measure \[
\PP_\pm = \frac{1}{\PP(i_1 = \pm)} \PP |_{\{i_1 = \pm\}} = \frac{1}{p_\pm} \PP |_{\{i_1 = \pm\}}. 
\]
Using \eqref{eq:return_int1}, \eqref{eq:return_int2} and \eqref{eq:half}, we obtain
\begin{equation}\label{eq:return_int} 
\int \limits_{\Sigma_2^+ \times M} n_{M} \, d\nu \le1 + \nu(E) (\max(\mathbb{E}_-N_-, \mathbb{E}_+N_+) - 1)\le 1 + p(\max(\mathbb{E}_-N_-, \mathbb{E}_+N_+)-1).
\end{equation}

Define random variables $X_j^\pm \colon \Sigma_2^+ \to \R$, $j \in\N$, by
	\[ 
	X_j^-(\ii) = \begin{cases} 1 &\text{if } i_j = - \\ -\gamma &\text{if } i_j = + \end{cases}, \qquad X_j^+(\ii) = \begin{cases} -\gamma &\text{if } i_j = - \\
	1 &\text{if } i_j = + \end{cases}
	.\]
	Then $X_2^-, X_3^-, \ldots$ is an i.i.d.~sequence of random variables with $\PP_-(X_j^- = 1) = p_-$, $\PP_-(X_j^- = -\gamma) = p_+$. To estimate $\mathbb{E}_-N_-$, note that for $\ii \in \{i_1 = -\}$ we have
	\[ N_-(\ii) = \inf \{ n \geq 1 : a^{1 + X_2^- + \ldots + X_n^-} x_+ \geq x_+\} = \inf \{ n \geq 2 : X_2^- + \ldots + X_n^- \leq -1\},  \]
	as for $n < N_-(\ii)$ we have $f_{i_n} \circ \ldots \circ f_{i_1} (x_+) < x_+$ and $f_-(x) = ax,\ f_+(x) = a^{-\gamma}x$ on $[0,x_+]$. Consequently, $N_-$ is a stopping time for $\{X_j^-\}_{j=2}^{\infty}$. We show that $\mathbb{E}_-N_- < \infty$. To do this, note that by Hoeffding's inequality (see Theorem~\ref{thm:hoef}) and \eqref{eq:gamma},
	\begin{align*} \PP_-(N_- > n + 1) &\leq \PP_-\Big(\sum \limits_{j=2}^{n+1} X_j^- > -1\Big)\\ &= \PP_-\Big(\sum \limits_{j=2}^{n+1} X_j^- - n\mathbb{E}_- X_2^- \geq -1 - n(p_- - \gamma p_+)\Big)\\ &\leq \exp \Big(- \frac{2(1 + n(p_- - \gamma p_+))^2}{n (\gamma + 1)^2} \Big) \leq \exp(-cn)
	\end{align*}
	for some constant $c>0$ and $n \in \N$ large enough. We have used here the fact that $t:= - 1 - n(p_- - \gamma p_+)$ is positive for $n$ large enough, following from  \eqref{eq:gamma}. As $\mathbb{E}_-N_- = \sum \limits_{n=0}^\infty \PP_-(N > n)$, the above inequality implies $\mathbb{E}_-N_- < \infty$. 
	
Let 
\[ 
S_{N_-}(\ii) = \sum \limits_{n=2}^{N_-(\ii)} X_n^-(\ii).
\]
This random variable is well-defined, since $2 \leq N_- < \infty$ holds $\PP_-$-almost surely. As $\mathbb{E}_-N_- < \infty$, we can apply Wald's identity (see Theorem~\ref{thm:wald}) to obtain
	\begin{equation}\label{eq:wald} \mathbb{E}_-S_{N_-} = \mathbb{E}_- X_2^-(\mathbb{E}_-N_- -1) = (p_- - \gamma p_+) (\mathbb{E}_-N_- -1).
	\end{equation}
	In order to estimate  $\mathbb{E}_-S_{N_-}$, we condition on $X_2^-$ and note that $S_{N_-} \geq -1-\gamma$ almost surely and, by \eqref{eq:gamma},  $-\gamma < -1$. This gives
	\begin{equation}\label{eq:ESN bound}
	\begin{split}
	\mathbb{E}_- S_{N_-} & = p_-\: \mathbb{E}_-(S_{N_-}|X_2^- = 1)  + p_+ \: \mathbb{E}_-(S_{N_-}|X_2^- = -\gamma) \\
	& \geq p_- \: (-1 - \gamma) - p_+\: \gamma \\
	& = -p_- - \gamma.
	\end{split}
	\end{equation}
	Combining this with \eqref{eq:gamma} and \eqref{eq:wald} we get
	\begin{equation}\label{eq:EN-}
	\mathbb{E}_- N_- -1 \leq \frac{p_- + \gamma}{\gamma p_+ - p_-}. 
	\end{equation}
	
By the symmetry \eqref{eq:sym} and $x_+ = \II(x_-)$, we can estimate $\mathbb{E}_+N_+$ in the same way, exchanging the roles of $p_-$ and $p_+$, obtaining
\begin{equation}\label{eq:EN+}
\mathbb{E}_+ N_+ -1 \leq \frac{p_+ + \gamma}{\gamma p_- - p_+}.
\end{equation}
Applying \eqref{eq:EN-} and \eqref{eq:EN+} to \eqref{eq:return_int} we see that
	\[ \int \limits_{\Sigma_2^+ \times M} n_{M}(\ii, x)d\nu (\ii, x) \leq 1 + p \max\Big(\frac{p_- + \gamma}{\gamma p_+ - p_-}, \frac{p_+ + \gamma}{\gamma p_- - p_+} \Big) =  1 + p \frac{p + \gamma}{\gamma (1-p) - p}. \]
	Invoking \eqref{eq:kac}, we obtain
	\[
	\mu(M) \ge \frac{1}{1 + p \frac{p + \gamma}{\gamma (1-p) - p}} = \frac{\gamma (1-p)-p}{\gamma - p(1-p)},
	\]
which ends the proof.
\end{proof}

We finish the paper with some remarks on the limitations of our method for proving singularity of the measure $\mu$.

\begin{rem} One should be aware that in general, the upper bound $-\frac{H(p_-, p_+)}{\chi(\mu)}$ does not coincide with the actual value of $\dim_H \mu$ for AM-systems. Indeed, for $(p_-, p_+) = (\frac 1 2, \frac 1 2)$ we have $H(p_-, p_+) = \log 2$ and by \eqref{eq:chi formula}
	\[
	\chi(\mu) = \Big( \frac{1+ \gamma}{2} \mu(M) + \frac{1 - \gamma}{2} \Big) \log a \ge  \log a.
	\]
	On the other hand, \cite[Theorems 2.10 and 2.12]{BS19Singular} provide an exact value of the dimension of $\mu$ in the resonance case $\gamma = k \in \N$, $k \ge 2$, yielding
	\[ \hdim\mu = \hdim(\supp \mu) = \frac{\log \eta}{\log a}, \]
	where $\eta \in (\frac 1 2, 1)$ is the unique solution of the equation $\eta^{k+1} - 2\eta + 1 = 0$. Therefore,
	\[ \hdim\mu = \frac{\log \eta}{\log a} < \frac{\log \frac{1}{2}}{\log a} \leq  -\frac{H(p_-, p_+)}{\chi(\mu)}.\]
\end{rem}

\begin{rem} It is natural to ask, what is a possible range of parameters, for which the method presented is this paper could be applied. Let us discuss this in the basic case $p_- = p_+ = \frac 1 2$. Following the proof of Theorem~\ref{thm:singularity_lyap} in this case, we see that by \eqref{eq:entropy_lyap} and \eqref{eq:chi formula},  if for a given $\gamma > 1$ we have
\[\mu(M) > \frac{\gamma - 1}{\gamma + 1},\]
then the measure $\mu$ is singular for $a < 1$ small enough (depending on $\gamma$). On the other hand, combining \eqref{eq:kac}, \eqref{eq:return_int}, \eqref{eq:wald} and noting that $\mathbb{E}_- N_- = \mathbb{E}_+ N_+$ and $\mathbb{E}_- S_{N_-} = \mathbb{E}_+ S_{N_+}$ for $p_- = p_+ = \frac 1 2$, we see that
\[ \mu(M) \geq \frac{\gamma - 1}{\gamma - 1 - \mathbb{E}_- S_{N_-}}, \]
provided that the condition of Lemma~\ref{lem:LR}(c) is satisfied (which for a fixed $\gamma > 1$ holds for small enough $a \in (0,1)$). Therefore, if for fixed $\gamma > 1$ inequality
\begin{equation}\label{eq:ESN > -2} \mathbb{E}_- S_{N_-} > -2
\end{equation}
is satisfied, then $\mu$ is singular for $a \in (0,1)$ small enough. The proof of Theorem \ref{thm:singularity_lyap} shows that \eqref{eq:ESN > -2} holds for $\gamma \in (1, \frac 3 2)$. 
Figure~\ref{fig:ESN} presents computer simulated values of $\mathbb{E}_- S_{N_-}$ for $\gamma$ in the interval $(1,3)$.
\begin{figure}[ht!]
	\begin{center}
		\includegraphics[width=0.9\textwidth]{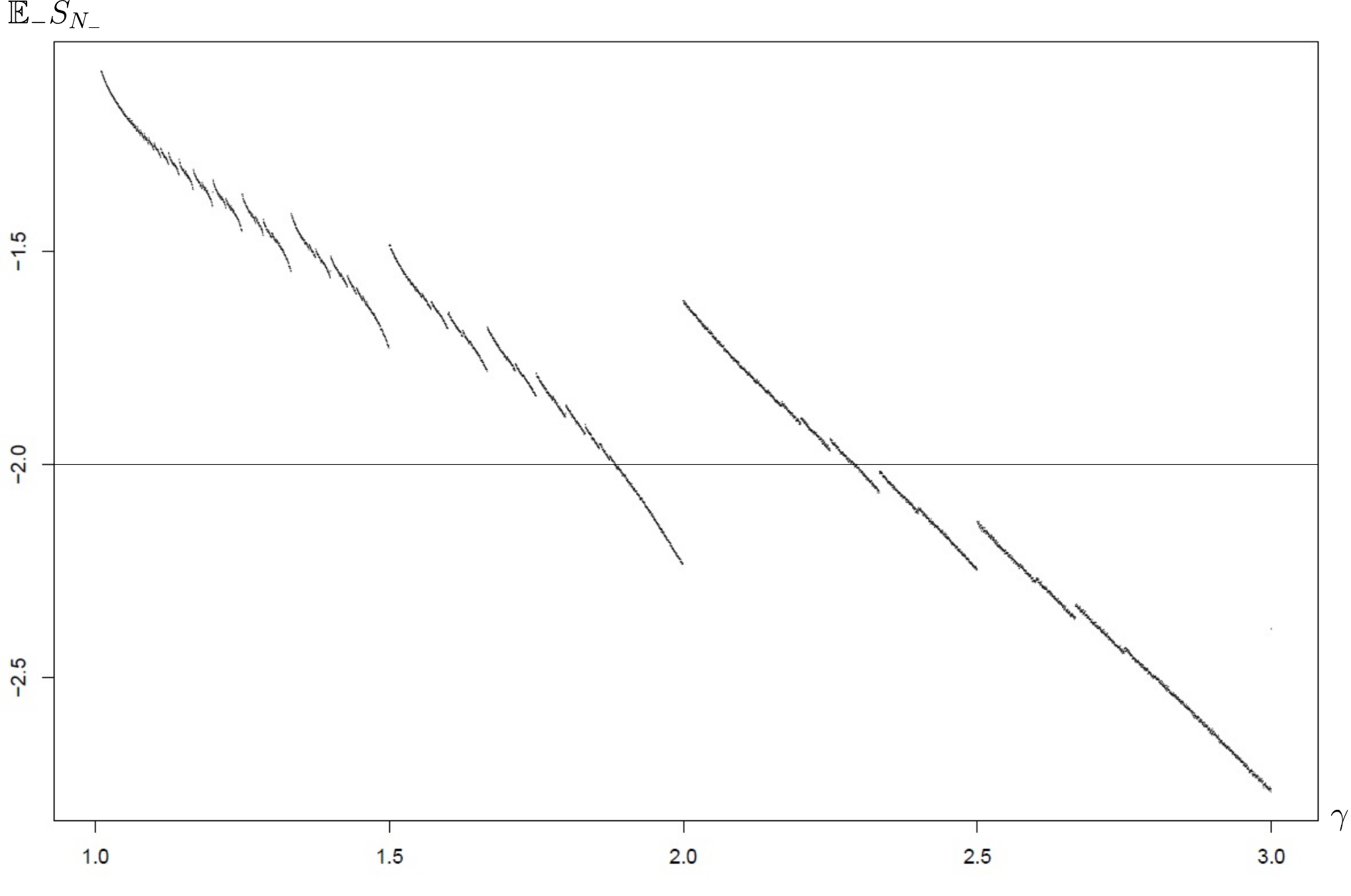}
	\end{center}
	\caption{Simulated values of $\mathbb{E}_- S_{N_-}$ as a function of $\gamma$. The values of $\gamma$ are presented on the $x$-coordinate axis, while the $y$-coordinate gives the corresponding value of $\mathbb{E}_- S_{N_-}$. Simulations were performed for $4000$ values of $\gamma$, uniformly spaced in the interval $(1,3)$. For each choice of  $\gamma$, we performed $40000$ simulations of $3000$ steps of the corresponding random walk.}\label{fig:ESN}
\end{figure} 
It suggests that the range of parameters $\gamma$ for which the singularity of $\mu$ holds with $a$ small enough could be extended from $(1, \frac{3}{2})$ to a larger set of the form $(1, \gamma_1) \cup (2, \gamma_2)$, for some $\gamma_1 \in (1,2)$, $\gamma_2 \in (2, 3)$. It is easy to see that one can obtain \eqref{eq:ESN > -2} for some $\gamma>\frac 3 2$ by conditioning on a larger number of steps in \eqref{eq:ESN bound}. We do not pursue the task of finding a wider set of possible parameters in this work. One should note, however, that \eqref{eq:ESN > -2} cannot hold for $\gamma \geq 3$, as the formula from the first line of \eqref{eq:ESN bound} can be used together with an obvious bound $S_{N_-} \leq -1$ to obtain (for $p_- = p_+ = \frac 1 2$)
\[ \mathbb{E}_- S_{N_-} \leq -p_- - p_+ \gamma = \frac{- 1 - \gamma}{2},\]
yielding $\mathbb{E}_- S_{N_-} \leq -2$ for $\gamma \geq 3$. This shows that the method used in this paper cannot be (directly) applied for $\gamma \geq 3$ (even though there do exist AM-systems with $\gamma \geq 3$ for which $\mu$ is singular -- see \cite[Theorems 2.10 and 2.12]{BS19Singular}). In order to obtain an optimal range of $\gamma$'s satisfying \eqref{eq:ESN > -2}, one should compute $\mathbb{E}_- S_{N_-}$ explicitly in terms of $\gamma$. This however seems to be complicated and Figure~\ref{fig:ESN} suggests that one should not expect a simple analytic formula.
\end{rem}

\addcontentsline{toc}{chapter}{Bibliography}
\bibliographystyle{plain}
\bibliography{universal_bib}

\end{document}